\newcommand{\upperRomannumeral}[1]{\uppercase\expandafter{\romannumeral#1}}
\theoremstyle{plain}
  \newtheorem{proposition}[]{Proposition}
  \newtheorem{lemma}[]{Lemma}
  \newtheorem{theorem}[]{Theorem}
  \newtheorem{corollary}[]{Corollary}
  \newtheorem{remark}[]{Remark}
\title[First Lee-Yang zero]{Thermodynamic limit of the first Lee-Yang zero}
\author{Jianping Jiang}
\address{Yau Mathematical Sciences Center, Tsinghua University, Beijing 100084, China.}
\email{jianpingjiang@tsinghua.edu.cn}
\author{Charles M. Newman}
\address{Courant Institute of Mathematical Sciences, New York University,
	251 Mercer st, New York, NY 10012, USA, \& NYU-ECNU Institute of Mathematical
	Sciences at NYU Shanghai, 3663 Zhongshan Road North, Shanghai 200062, China.}
\email{newman@cims.nyu.edu}
\begin{document}
\begin{abstract}
We complete the verification of the 1952 Yang and Lee proposal that thermodynamic singularities are exactly the limits in ${\mathbb R}$ of finite-volume singularities in ${\mathbb C}$.
For the Ising model defined on a finite $\Lambda\subset\mathbb{Z}^d$ at inverse temperature $\beta\geq0$ and external field $h$, let $\alpha_1(\Lambda,\beta)$ be the modulus of the first zero (that closest to the origin) of its partition function (in the variable $h$). We prove that $\alpha_1(\Lambda,\beta)$ decreases to $\alpha_1(\mathbb{Z}^d,\beta)$ as $\Lambda$ increases to $\mathbb{Z}^d$ where $\alpha_1(\mathbb{Z}^d,\beta)\in[0,\infty)$ is the radius of the largest disk centered at the origin in which the free energy in the thermodynamic limit is analytic. We also note that $\alpha_1(\mathbb{Z}^d,\beta)$ is strictly positive if and only if $\beta$ is strictly less than the critical inverse temperature.
\end{abstract}

\maketitle

\section{Introduction and main result}
In this paper, we finish the rigorous verification for Ising models of the program initiated by Yang and Lee \cite{YL52} and Lee and Yang \cite{LY52} that singularities of thermodynamic functions in the physical real parameter space result from the pinching of the real line in the thermodynamic limit  by singularities from the complex plane.

We consider the Ising model on $\mathbb{Z}^d$ with nearest-neighbor ferromagnetic pair interactions. The results in this paper can be extended to finite-range ferromagnetic pair interactions, but we restrict to the nearest-neighbor case for the sake of simplicity. Let $\Lambda$ be a finite subset of $\mathbb{Z}^d$. We slightly abuse the notation and also write $\Lambda$ for the graph with the set of vertices $\Lambda$ and the set of edges being all nearest-neighbor edges between vertices in $\Lambda$. The Ising model on $\Lambda$ at inverse temperature $\beta\geq0$ with free boundary conditions and external field $h\in\mathbb{R}$ is defined by the probability measure $\mathbb{P}_{\Lambda,\beta,h}$ on $\{-1,+1\}^{\Lambda}$ such that
\begin{equation}\label{eq:Isingdef}
	\mathbb{P}_{\Lambda,\beta,h}(\sigma):=\frac{\exp\left[\beta\sum_{uv\in \Lambda}\sigma_u\sigma_v+h\sum_{u\in \Lambda}\sigma_u\right]}{Z_{\Lambda,\beta,h}}, \sigma\in\{-1,+1\}^{\Lambda},
\end{equation}
where the first sum is over all nearest-neighbor edges $uv$ with both $u$ and $v$ in $\Lambda$, and $Z_{\Lambda,\beta,h}$ is the partition function. More precisely,
\begin{equation}
	Z_{\Lambda,\beta,h}:=\sum_{\sigma\in\{-1,+1\}^{\Lambda}}\exp\left[\beta\sum_{uv\in \Lambda}\sigma_u\sigma_v+h\sum_{u\in \Lambda}\sigma_u\right].
\end{equation}
A seminal result due to Lee and Yang \cite{LY52} says that all zeros of $Z_{\Lambda,\beta,h}$ (in the variable $h\in\mathbb{C}$ with $\beta\geq 0$ fixed) are purely imaginary. See \cite{Rue71,SG73,New74,LS81,NG83,BBCKK04,BBCK04} for various generalizations of the Lee-Yang result. We may assume that all the zeros are labelled as $\pm i\alpha_j(\Lambda,\beta)$ such that
\begin{equation}
	0<\alpha_1(\Lambda,\beta)\leq\alpha_2(\Lambda,\beta)\leq\dots.
\end{equation} 
In the Lee-Yang program \cite{YL52,LY52}, it was argued that only for a temperature at or below the critical one, in the thermodynamic limit, complex singularities of the free energy pinch the physical (i.e., real) domain. In this paper, we prove this conjecture rigorously.

We define
\begin{equation}\label{eq:f_Vdef}
	f_{\Lambda,\beta}(h):=\frac{\ln Z_{\Lambda,\beta,h}}{|\Lambda|},
\end{equation}
where $|\Lambda|$ is the number of vertices in $\Lambda$; except for a multiplicative constant and a minus sign, this is the \textbf{free energy in $\Lambda$} (sometimes it is referred to as the pressure in $\Lambda$~\cite{FV18}). For nice subsets of $\mathbb{Z}^d$, e.g., $B_n:=[-n,n]^d\cap\mathbb{Z}^d$, it is well-known that $f_{B_n,\beta}$ converges as $n\rightarrow \infty$. That is, there is a function $f_{\beta}:\mathbb{R}\rightarrow\mathbb{R}$ such that
\begin{equation}
	f_{\beta}(h):=\lim_{n\rightarrow\infty}f_{B_n,\beta}(h), h\in \mathbb{R}.
\end{equation}
Here $f_{\beta}$ is called the \textbf{free energy}; see, e.g., Theorem 3.6 of \cite{FV18} for a proof. Let $\beta_c(d)$ be the critical inverse temperature. It is known (see, e.g., Theorem 3.25 of \cite{FV18}) that $\beta_c(1)=\infty$ and $\beta_c(d)\in(0,\infty)$ for $d\geq 2$. Our main result is
\begin{theorem}\label{thm:1LY}
	Let $\beta\geq0$ and $\{\Lambda_n\}$ be a sequence of finite subsets of $\mathbb{Z}^d$ satisfying $\Lambda_n\subset\Lambda_{n+1}$ for each $n\in\mathbb{N}$ and $\cup_{n=1}^{\infty}\Lambda_n=\mathbb{Z}^d$. Let $\alpha_1(\Lambda_n,\beta)$ be the modulus of the first zero of $Z_{\Lambda_n,\beta,h}$. Then we have
	\begin{equation}
		\alpha_1(\Lambda_n,\beta)\downarrow \alpha_1(\mathbb{Z}^d,\beta) \text{ as }n\uparrow\infty,
	\end{equation}
where $\alpha_1(\mathbb{Z}^d,\beta)\in[0,\infty)$ is the radius of the largest disk centered at the origin where the free energy $f_{\beta}$ is analytic. Moreover, $\alpha_1(\mathbb{Z}^d,\beta)>0$ if and only if $\beta\in[0,\beta_c(d))$.
\end{theorem}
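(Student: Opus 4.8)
By the Lee--Yang theorem $Z_{\Lambda,\beta,h}\neq0$ on the strip $\{|\mathrm{Im}\,h|<\alpha_1(\Lambda,\beta)\}$ and $Z_{\Lambda,\beta,h}>0$ on $\mathbb{R}$, so the branch of $\tfrac1{|\Lambda|}\log Z_{\Lambda,\beta,h}$ that is real on $\mathbb{R}$ is analytic on that strip, while $f_{\Lambda,\beta}$ has a logarithmic singularity at $\pm i\alpha_1(\Lambda,\beta)$; thus $\alpha_1(\Lambda,\beta)$ is the radius of analyticity of $f_{\Lambda,\beta}$ at $0$. Since $f_{\Lambda,\beta}$ is even (flip symmetry) and its nearest singularities to $0$ sit on the imaginary axis, its Taylor coefficients $c_{2k}^{\Lambda}:=f_{\Lambda,\beta}^{(2k)}(0)/(2k)!=\kappa_{2k}^{\Lambda}/((2k)!\,|\Lambda|)$, with $\kappa_{2k}^{\Lambda}$ the $2k$-th cumulant of $\sum_{u\in\Lambda}\sigma_u$ under $\mathbb{P}_{\Lambda,\beta,0}$, satisfy $\limsup_k|c_{2k}^{\Lambda}|^{1/k}=\alpha_1(\Lambda,\beta)^{-2}$ for every finite $\Lambda$. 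I will use the classical monotonicity $\alpha_1(\Lambda,\beta)\geq\alpha_1(\Lambda',\beta)$ for $\Lambda\subseteq\Lambda'$ (a standard consequence of the Lee--Yang property, e.g.\ via Asano contractions). Hence, for any increasing exhaustion $\{\Lambda_n\}$, the numbers $\alpha_1(\Lambda_n,\beta)$ decrease to some $L\in[0,\infty)$, and interleaving $\{\Lambda_n\}$ with the boxes $\{B_n\}$ shows $L$ is independent of the exhaustion and equals $\lim_n\alpha_1(B_n,\beta)$. It thus suffices to prove $\alpha_1(\mathbb{Z}^d,\beta)=L$ working with boxes, where $f_{B_n,\beta}\to f_\beta$ on $\mathbb{R}$ is known.

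\textbf{Step 2 (The bound $\alpha_1(\mathbb{Z}^d,\beta)\geq L$ and analyticity off the imaginary axis).} Assume $L>0$. Each $f_{B_n,\beta}$ is analytic on $\{|h|<L\}$ (since $\alpha_1(B_n,\beta)\geq L$) and real on $(-L,L)$, and the family is locally uniformly bounded there: $\mathrm{Re}\,f_{B_n,\beta}=\tfrac1{|B_n|}\log|Z_{B_n,\beta,\cdot}|$ is bounded above on $\{|h|\leq\rho\}$, $\rho<L$, by an explicit constant, bounded below by Harnack's inequality applied to the nonnegative harmonic function $\mathrm{const}-\mathrm{Re}\,f_{B_n,\beta}$ (whose value at $0$ is controlled because $f_{B_n,\beta}(0)$ converges), and then $|f_{B_n,\beta}|$ is bounded by the Borel--Carath\'eodory inequality. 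By Vitali's theorem and $f_{B_n,\beta}\to f_\beta$ on $(-L,L)$, $f_\beta$ extends analytically to $\{|h|<L\}$, so $\alpha_1(\mathbb{Z}^d,\beta)\geq L$. The same normal-family argument on $\{\mathrm{Re}\,h>0\}$ and $\{\mathrm{Re}\,h<0\}$ (where all $f_{B_n,\beta}$ are analytic, as no Lee--Yang zero lies off $i\mathbb{R}$) shows $f_\beta$ is analytic on $\mathbb{C}\setminus i\mathbb{R}$; hence, when $f_\beta$ is analytic at $0$, the radius of convergence of its Taylor series at $0$ equals $\alpha_1(\mathbb{Z}^d,\beta)$.

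\textbf{Step 3 (The \emph{moreover} statement, and the case $\beta\geq\beta_c$).} If $\beta>\beta_c$ the spontaneous magnetization $m^{\ast}(\beta)=\lim_{h\downarrow0}f_\beta'(h)$ is strictly positive, so the even function $f_\beta$ has a corner at $h=0$ and $\alpha_1(\mathbb{Z}^d,\beta)=0$. If $\beta=\beta_c$ then $m^{\ast}(\beta_c)=0$ but $\chi(\beta_c)=\sum_x\langle\sigma_0\sigma_x\rangle_{\beta_c}=\infty$ (sharpness of the phase transition), whence by Fatou $f_{\beta_c}''(h)=\chi_{\beta_c}(h)\to\infty$ as $h\downarrow0$, so $f_{\beta_c}$ is not $C^2$ at $0$ and $\alpha_1(\mathbb{Z}^d,\beta_c)=0$. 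In either case $\alpha_1(\mathbb{Z}^d,\beta)=0$, and since $\alpha_1(\mathbb{Z}^d,\beta)\geq L\geq0$ by Step~2 we get $L=0=\alpha_1(\mathbb{Z}^d,\beta)$. If $\beta<\beta_c$, then $\chi(\beta)<\infty$ (sharpness), and the tree-graph bound $|\kappa_{2k}^{\Lambda}|\leq(2k)^{2k-2}|\Lambda|\,\chi(\beta)^{2k-1}$ gives $\limsup_k|c_{2k}^{\Lambda}|^{1/k}\leq(e\,\chi(\beta))^{2}$ uniformly in finite $\Lambda$, so $\alpha_1(\Lambda,\beta)\geq(e\,\chi(\beta))^{-1}>0$ for all finite $\Lambda$ and $\alpha_1(\mathbb{Z}^d,\beta)\geq L\geq(e\,\chi(\beta))^{-1}>0$. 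This proves the ``moreover'' claim.

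\textbf{Step 4 (The bound $\alpha_1(\mathbb{Z}^d,\beta)\leq L$ for $\beta<\beta_c$, and the main obstacle).} Since $L>0$, Step~2 gives $f_{B_n,\beta}\to f_\beta$ locally uniformly on $\{|h|<L\}$, hence $c_{2k}^{B_n}\to c_{2k}$, where $f_\beta(h)=\sum_k c_{2k}h^{2k}$ near $0$ and (by Step~2) the radius of this series is $\alpha_1(\mathbb{Z}^d,\beta)$. The key input is a ferromagnetic correlation inequality: the $2k$-point Ursell functions have the definite sign $(-1)^{k-1}$ (Shlosman's theorem), their absolute values are nondecreasing under the addition of sites and bonds (Griffiths-type monotonicity), and for $\beta<\beta_c$ they are summable over $\mathbb{Z}^d$ (from exponential decay of truncated correlations). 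Writing $|\kappa_{2k}^{B_n}|/|B_n|$ as the average over $x_1\in B_n$ of $S_{x_1}^{(n)}:=\sum_{\vec x'\in B_n^{2k-1}}|u_{B_n}(x_1,\vec x')|$, each $S_{x_1}^{(n)}$ is at most the translation-invariant infinite-volume value $S_\infty=\sum_{\vec x'}|u_{\mathbb{Z}^d}(0,\vec x')|=(2k)!\,|c_{2k}|$, so $|c_{2k}^{B_n}|\leq|c_{2k}|$ for every $n$. Therefore $\limsup_k|c_{2k}|^{1/k}\geq\limsup_k|c_{2k}^{B_n}|^{1/k}=\alpha_1(B_n,\beta)^{-2}$ for every $n$; taking the supremum over $n$ and using $\alpha_1(B_n,\beta)\downarrow L$ gives $\limsup_k|c_{2k}|^{1/k}\geq L^{-2}$, i.e.\ $\alpha_1(\mathbb{Z}^d,\beta)\leq L$. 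With Step~2 this yields $\alpha_1(\mathbb{Z}^d,\beta)=L$, completing the proof. The hard part is exactly this last step: what is needed is that the Lee--Yang zeros accumulate with strictly positive normalized density down to their infimum $L$ (equivalently, the domination $|c_{2k}^{\Lambda}|\leq|c_{2k}|$), and this is where the ferromagnetic structure is indispensable --- for a general sequence of entire functions with purely imaginary zeros, the number of zeros near a point can diverge (with residues shrinking like $1/|\Lambda|$) while the limiting free energy stays analytic there, a degeneracy ruled out for the Ising model only by the Ursell-function inequalities above.
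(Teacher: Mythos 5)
Your overall architecture (monotonicity plus interleaving to reduce to boxes; a zero-free disk of radius $L$ forcing analyticity of $f_\beta$ there; Ursell-function sign/monotonicity/translation-invariance giving $|c_{2k}^{B_n}|\leq|c_{2k}|$ and hence the reverse inequality) is sound, and your Step~2 is a legitimate alternative to part of the paper's route: a Vitali/normal-families argument replaces the combination of the zero-counting bound $\sum_j\alpha_j^{-2k}\leq 4|\Lambda|\alpha_1^{-2k}$ with the convergence of derivatives from Lebowitz. But Step~3 contains the fatal gap, and it sits exactly where the real difficulty of the theorem lies. You claim a tree-graph bound $|\kappa_{2k}^{\Lambda}|\leq(2k)^{2k-2}|\Lambda|\chi(\beta)^{2k-1}$, uniform in $\Lambda$ and with no $k$-dependent constant beyond the tree count, and deduce $\alpha_1(\Lambda,\beta)\geq(e\chi(\beta))^{-1}>0$ for all finite $\Lambda$ and all $\beta<\beta_c(d)$. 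No such bound is available: Lebowitz's inequalities (the input cited in Remark~\ref{rem:b_k} and Theorem~\ref{thm:Leb}) control each fixed-order Ursell sum and give infinite differentiability of $f_\beta$, but they do not control the growth in $k$ of the constants, which is precisely what separates $C^\infty$ from analytic. Your claimed bound is equivalent to $\limsup_k[|b_{2k}|/(2k)!]^{1/(2k)}<\infty$, i.e.\ to the analyticity of $f_\beta$ at $h=0$ and to the uniform zero-free region of Corollary~\ref{cor:zero-free} --- one of the new results of this paper. Had that estimate been available, analyticity of the pressure for all $\beta<\beta_c(d)$ would have followed from \cite{ABF87} decades ago; in fact it was only established by Ott in 2020, and the paper imports that theorem (Corollary~1.4 of \cite{Ott20}) precisely to obtain $r(\beta)>0$, after which the domination $|u_{2k}(M_{B_n},0)|/|B_n|\leq|b_{2k}|$ yields the uniform lower bound on $\alpha_1(B_n,\beta)$ that you tried to get for free. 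So your proof assumes (a quantitative strengthening of) the hardest ingredient.

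Two further points. First, the monotonicity $\alpha_1(\Lambda,\beta)\geq\alpha_1(\Lambda',\beta)$ for $\Lambda\subseteq\Lambda'$ is not classical and does not follow from Asano contractions (which yield the location of the zeros, not the monotone motion of the first one); it is Corollary~1 of \cite{CJN22}, proved via monotonicity of Ursell functions, and is another of the three key inputs the paper identifies. The fact you use is correct, but your justification would not produce it. Second, in Step~4 the identity $S_\infty=(2k)!\,|c_{2k}|$ requires the full two-sided convergence $u_{2k}(M_{B_n,\beta,0})/|B_n|\to b_{2k}(\beta)$ (the lower bound needs the ``bulk of the box'' argument of Proposition~\ref{prop:b_kdef}); you assert it. With that identity and with $L>0$ supplied by Ott's theorem rather than the tree bound, your Steps~2 and~4 do assemble into a correct proof, essentially parallel to the paper's.
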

\begin{remark}
	A priori, the limit of the first Lee-Yang zero could have nothing to do with the singularities of the free energy. That is, for $\beta<\beta_c(d)$ $\alpha_1(\Lambda_n,\beta)$ could approach $0$ as $n\rightarrow\infty$, and for $\beta\geq \beta_c(d)$  the free energy could have no singularity at the limit position of $\alpha_1(\Lambda_n,\beta)$. Theorem \ref{thm:1LY} rules out both these possibilities.
\end{remark}

An immediate consequence of Theorem \ref{thm:1LY} is the following corollary.

\begin{corollary}\label{cor:zero-free}
	For $\beta\in[0,\beta_c(d))$ and any finite domain $\Lambda\subset\mathbb{Z}^d$, the partition function $Z_{\Lambda,\beta,h}$ is zero-free in the open disk centered at $h=0$ with radius $\alpha_1(\mathbb{Z}^d,\beta)$. Thus, there is a fixed zero-free neighborhood of the origin for the partition function of the Ising model for all finite domains if and only if $\beta\in[0,\beta_c(d))$.
\end{corollary}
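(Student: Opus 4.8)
\medskip
\noindent\emph{Proof proposal.} The plan is to route everything through the zero‑counting measures. Since $Z_{\Lambda,\beta,h}$ is an even real polynomial in $\sinh^2 h$ (of degree about $|\Lambda|/2$), the Lee--Yang theorem says its roots in $\sinh^2h$ lie in $[-1,0)$, so
\[
 Z_{\Lambda,\beta,h}=c_{\Lambda,\beta}\prod_{j}\bigl(\sinh^2 h+\sin^2\alpha_j\bigr),\qquad c_{\Lambda,\beta}>0,\ \alpha_j\in(0,\tfrac\pi2],
\]
with $\alpha_1(\Lambda,\beta)=\min_j\alpha_j$. Introducing the finite measure $\rho_{\Lambda,\beta}:=|\Lambda|^{-1}\sum_j\delta_{\alpha_j}$ on $(0,\pi/2]$ one gets
\[
 f_{\Lambda,\beta}(h)-f_{\Lambda,\beta}(0)=\int\ln\!\Bigl(1+\tfrac{\sinh^2 h}{\sin^2\alpha}\Bigr)\,d\rho_{\Lambda,\beta}(\alpha),
\]
an identity whose radius of analyticity around $h=0$ is exactly $\alpha_1(\Lambda,\beta)=\inf\operatorname{supp}\rho_{\Lambda,\beta}$; in the variable $u=\sinh^2 h$ the Taylor coefficients have sign $(-1)^{k-1}$, so Pringsheim's theorem locates the singularity at $u=-\sin^2\alpha_1(\Lambda,\beta)$. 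The first step is the monotonicity $\Lambda\subseteq\Lambda'\Rightarrow\alpha_1(\Lambda',\beta)\le\alpha_1(\Lambda,\beta)$: I would get this from the Lee--Yang/Asano‑contraction machinery (build $Z_{\Lambda',\beta,\cdot}$ from $Z_{\Lambda,\beta,\cdot}$ by adjoining vertices and switching on ferromagnetic couplings, each move leaving the first zero no farther from the origin), or from the GKS inequalities. Hence $\alpha_1(\Lambda_n,\beta)$ is non‑increasing, $\alpha_1^\ast:=\lim_n\alpha_1(\Lambda_n,\beta)$ exists, and by sandwiching exhaustions against each other (again monotonicity) $\alpha_1^\ast$ is independent of $\{\Lambda_n\}$; so one may work with boxes $B_n$, for which $f_{B_n,\beta}\to f_\beta$ is known.

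\smallskip
\noindent\emph{Step 1: $\alpha_1^\ast\le\alpha_1(\mathbb Z^d,\beta)$.} Each $f_{\Lambda_n,\beta}$ is analytic on $\{|\operatorname{Im}h|<\alpha_1(\Lambda_n,\beta)\}\supseteq\{|\operatorname{Im}h|<\alpha_1^\ast\}$, and the product formula gives a bound for $|f_{\Lambda_n,\beta}-f_{\Lambda_n,\beta}(0)|$ on compacts of that strip uniform in $n$ (the integrand is bounded there because $\operatorname{supp}\rho_{\Lambda_n,\beta}\subseteq[\alpha_1^\ast,\pi/2]$). Since $f_{\Lambda_n,\beta}\to f_\beta$ on $\mathbb R$, Vitali's theorem makes $f_\beta$ analytic on $\{|\operatorname{Im}h|<\alpha_1^\ast\}$. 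It is classical (Lee--Yang plus Vitali on $\{\operatorname{Re}h\neq0\}$, where $Z_{\Lambda_n,\beta,\cdot}$ never vanishes and $f_{\Lambda_n,\beta}$ is uniformly locally bounded) that $f_\beta$ is analytic for $\operatorname{Re}h\neq0$. Since $\{|h|<\alpha_1^\ast\}\subseteq\{\operatorname{Re}h\neq0\}\cup\{|\operatorname{Im}h|<\alpha_1^\ast\}$, $f_\beta$ is analytic in the disk of radius $\alpha_1^\ast$, i.e.\ $\alpha_1(\mathbb Z^d,\beta)\ge\alpha_1^\ast$.

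\smallskip
\noindent\emph{Step 2: $\alpha_1^\ast\ge\alpha_1(\mathbb Z^d,\beta)$ — the hard part.} Take a weak limit $\rho_\beta$ of $\{\rho_{\Lambda_n,\beta}\}$ on the compact interval $[0,\pi/2]$. For $h\in\{\operatorname{Re}h\neq0\}\cup\{|\operatorname{Im}h|<\alpha_1^\ast\}$ the integrand $\alpha\mapsto\ln(1+\sinh^2h/\sin^2\alpha)$ is continuous and bounded on $[\alpha_1^\ast,\pi/2]$, so the identity above passes to the limit, $f_\beta(h)-f_\beta(0)=\int\ln(1+\sinh^2h/\sin^2\alpha)\,d\rho_\beta(\alpha)$. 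The right‑hand side is a logarithmic potential which is not analytic at $i\alpha$ for any $\alpha\in\operatorname{supp}\rho_\beta$ (if it continued analytically past $i\alpha_0$, so would the Cauchy transform of $\rho_\beta$, forcing $\rho_\beta$ to vanish near $\alpha_0$); hence $\alpha_1(\mathbb Z^d,\beta)\le\inf\operatorname{supp}\rho_\beta$. It remains to prove $\inf\operatorname{supp}\rho_\beta\le\alpha_1^\ast$, i.e.\ that the edge of the limiting zero distribution is not pushed strictly above $\lim_n\alpha_1(\Lambda_n,\beta)$. For this I would tile: for fixed $m$, $\Lambda_n$ eventually contains $q_n\ge c\,|\Lambda_n|/|\Lambda_m|$ pairwise non‑adjacent translates of $\Lambda_m$, whose disjoint union has partition function $Z_{\Lambda_m,\beta,h}^{\,q_n}$ and hence at least $q_n$ zeros (with multiplicity) at $i\alpha_1(\Lambda_m,\beta)$. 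Building $\Lambda_n$ from this disjoint union by adjoining the remaining vertices (each only appending a zero at $i\pi/2\ge i\alpha_1(\Lambda_m,\beta)$) and then the remaining ferromagnetic edges, and using that adjoining such an edge moves no Lee--Yang zero to a larger modulus, one keeps $\ge q_n$ zeros of modulus $\le\alpha_1(\Lambda_m,\beta)$, so $\rho_{\Lambda_n,\beta}([0,\alpha_1(\Lambda_m,\beta)])\ge q_n/|\Lambda_n|\ge c/|\Lambda_m|$. By upper semicontinuity of mass on closed sets under weak convergence, $\rho_\beta([0,\alpha_1(\Lambda_m,\beta)])\ge c/|\Lambda_m|>0$, so $\inf\operatorname{supp}\rho_\beta\le\alpha_1(\Lambda_m,\beta)$; letting $m\to\infty$ gives $\inf\operatorname{supp}\rho_\beta\le\alpha_1^\ast$. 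Combining, $\alpha_1(\mathbb Z^d,\beta)=\alpha_1^\ast=\inf\operatorname{supp}\rho_\beta$.

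\smallskip
\noindent\emph{Step 3: the dichotomy.} Once $\alpha_1(\mathbb Z^d,\beta)$ is identified with the radius of analyticity of $f_\beta$ at $0$, the equivalence with $\beta<\beta_c(d)$ is the classical analyticity/non‑analyticity of the pressure in $h$ at $h=0$. For $\beta>\beta_c(d)$ the spontaneous magnetization $m^\ast(\beta)>0$ makes $f'_\beta$ jump at $0$; for $\beta=\beta_c(d)$ one has $m^\ast(\beta_c)=0$ but, via the GHS inequality and $f_\beta(t)+f_\beta(-t)-2f_\beta(0)\ge2\int_0^t(t-s)\chi(\beta_c,s)\,ds$ together with $\chi(\beta_c,s)\uparrow\infty$ as $s\downarrow0$ (as $\chi(\beta_c)=\sum_x\langle\sigma_0\sigma_x\rangle_{\beta_c}=\infty$), $f_\beta$ fails to be $C^2$ at $0$; in both cases $\alpha_1(\mathbb Z^d,\beta)=0$. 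Conversely, for $\beta<\beta_c(d)$ (all $\beta$ if $d=1$) $f_\beta$ is real‑analytic at $h=0$ and, being also analytic for $\operatorname{Re}h\neq0$, analytic in a complex neighborhood of $0$, so $\alpha_1(\mathbb Z^d,\beta)>0$. Corollary~\ref{cor:zero-free} is then immediate from $\alpha_1(\Lambda,\beta)\ge\alpha_1(\mathbb Z^d,\beta)$ for every finite $\Lambda$.

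\smallskip
\noindent\emph{The main obstacle} is Step 2, precisely the inequality $\inf\operatorname{supp}\rho_\beta\le\alpha_1^\ast$: soft tools (Vitali, weak convergence) only give the chain $\alpha_1^\ast\le\alpha_1(\mathbb Z^d,\beta)\le\inf\operatorname{supp}\rho_\beta$, and ruling out that the zeros accumulate only at moduli strictly larger than $\lim_n\alpha_1(\Lambda_n,\beta)$ requires genuinely using the ferromagnetic structure. The crucial input is the monotonicity of \emph{all} Lee--Yang zeros (not merely the first) under adjoining ferromagnetic couplings, to be proved via Asano contractions; this, together with the multiplicativity of the partition function on disjoint unions, is the technical heart of the argument.
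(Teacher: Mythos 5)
Your proposal does not follow the paper's route at all: in the paper, Corollary \ref{cor:zero-free} is an immediate consequence of Theorem \ref{thm:1LY} --- the monotone decrease of $\alpha_1(\Lambda_n,\beta)$ along any exhaustion whose first term is $\Lambda$ gives $\alpha_1(\Lambda,\beta)\ge\lim_n\alpha_1(\Lambda_n,\beta)=\alpha_1(\mathbb Z^d,\beta)$, which is positive exactly for $\beta<\beta_c(d)$. You instead re-prove the theorem from scratch via zero-counting measures, Vitali's theorem and a tiling argument. The soft half (your Step 1, and the final observation that the corollary follows once $\alpha_1(\Lambda,\beta)\ge\alpha_1(\mathbb Z^d,\beta)$ is known) is essentially sound, but two of the inputs you take for granted are genuine gaps.

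First, the monotonicity $\Lambda\subseteq\Lambda'\Rightarrow\alpha_1(\Lambda',\beta)\le\alpha_1(\Lambda,\beta)$ does not follow from ``Asano contraction or GKS'': GKS controls $u_2\propto\sum_j\alpha_j^{-2}$ but not $\alpha_1$ itself, and Asano--Ruelle contraction confines the zero set to regions without comparing ordered zeros of two partition functions. This monotonicity is Corollary 1 of \cite{CJN22}, proved via the monotonicity of Ursell functions of all orders, and it is precisely the nontrivial external input of the paper. Second, and more seriously, your Step 2 tiling requires that adjoining a ferromagnetic edge moves \emph{no} Lee--Yang zero to a larger modulus, i.e.\ simultaneous monotonicity of the whole ordered family $\alpha_1\le\alpha_2\le\cdots$. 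That is strictly stronger than first-zero monotonicity, is not known, and again is not what Asano contraction delivers; without it you cannot conclude $\rho_{\Lambda_n,\beta}\bigl([0,\alpha_1(\Lambda_m,\beta)]\bigr)\ge q_n/|\Lambda_n|$, so the inequality $\inf\operatorname{supp}\rho_\beta\le\alpha_1^\ast$ --- which you correctly single out as the crux --- remains unproved. (A secondary issue: passing $f_\beta(h)-f_\beta(0)=\int\ln\bigl(1+\sinh^2h/\sin^2\alpha\bigr)\,d\rho_\beta(\alpha)$ to the limit near $h=i\alpha_0$ involves an unbounded integrand, so weak convergence alone does not justify it.) The paper circumvents all of this by working with the power sums $\sum_j\alpha_j^{-2k}=\tfrac{k}{(2k)!}\,|u_{2k}(M_{\Lambda,\beta,0})|$: the signs and monotonicity of Ursell functions give two-sided control of these, identifying $\lim_n\alpha_1(B_n,\beta)$ with the radius of convergence $r(\beta)$ of the limiting cumulant series, after which Ott's analyticity theorem \cite{Ott20} and Lebowitz's derivative bounds \cite{Leb72} identify $r(\beta)$ with the radius of analyticity of $f_\beta$. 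To repair your argument you would either have to quote Theorem \ref{thm:1LY} together with the \cite{CJN22} monotonicity, or replace the all-zeros monotonicity by estimates at the level of power sums.
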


\begin{remark}
	Corollary \ref{cor:zero-free} generalizes the zero-free region result of \cite{Rue71,Rue73} (see also \cite{PR20}) from small $\beta$ to all $\beta\in[0,\beta_c(d))$. In particular, this implies that Corollary 1 of \cite{CJN21} extends to all $\beta\in[0,\beta_c(d))$. I.e., the critical Curie-Weiss perturbation for \textbf{any} $\beta<\beta_c(d)$ yields essentially the same non-Gaussian limit as occurs for $\beta=0$.
\end{remark}

\begin{remark}
	In combination with Theorem 1(b) of \cite{LR72}, Corollary \ref{cor:zero-free} implies that as long as $\beta\in[0,\beta_c(d))$, the partition function of the anti-ferromagnetic Ising model on $\mathbb{Z}^d$ has a fixed zero-free neighborhood of the origin which is independent of the domain where the model is defined.
\end{remark}

While Theorem \ref{thm:1LY} shows that it is the first Lee-Yang zero which is responsible for the closest singularity of the free energy, the thermodynamic behavior of other zeros is still open. For example, it is conjectured in \cite{LY52} that in the thermodynamic limit, the distribution of all Lee-Yang zeros has a density from which one should be able to extract information about critical phenomena, but the existence of such a density is proved in~\cite{BBCKK04} only for very large~$\beta$.

In the next section, we prove Theorem \ref{thm:1LY}. The proof makes use of three key ingredients: the monotonicity of Ursell functions (and thus monotonicity of the first Lee-Yang zero) from \cite{CJN22}, bounds on the derivatives of various orders of the free energy from \cite{Leb72} and their consequences (see Theorem \ref{thm:Leb} below), and the analyticity of the free energy for $\beta\in[0,\beta_c(d))$ from~\cite{Ott20}.

\section{Proof of the the main result}
Let $M_{\Lambda,\beta,h}$ be the \textbf{total magnetization} of the Ising model defined by \eqref{eq:Isingdef}:
\begin{equation}
	M_{\Lambda,\beta,h}:=\sum_{v\in\Lambda}\sigma_v.
\end{equation}
The \textbf{cumulants} of $M_{\Lambda,\beta,h}$, $u_k(M_{\Lambda,\beta,h})$, are defined by
\begin{equation}\label{eq:u_kMdef}
	u_k(M_{\Lambda,\beta,h}):=\left.\frac{d^k}{d t^k}\ln\left\langle\exp[t M_{\Lambda,\beta,h}]\right\rangle_{\Lambda,\beta,h}\right|_{t=0}, \forall k\in\mathbb{N},
\end{equation}
where $\langle\cdot\rangle_{\Lambda,\beta,h}$ denotes the expectation with respect to $\mathbb{P}_{\Lambda,\beta,h}$. Then it is easy to see that 
\begin{equation}
	u_k(M_{\Lambda,\beta,h})=\frac{d^k\ln Z_{\Lambda,\beta,h}}{d h^k}, \forall k\in\mathbb{N}.
\end{equation}
So we have
\begin{equation}\label{eq:f_VTaylor}
	f_{\Lambda,\beta}(h)=f_{\Lambda,\beta}(0)+\sum_{k=1}^{\infty}\frac{u_k(M_{\Lambda,\beta,0})}{|\Lambda|}\frac{h^k}{k!}, \forall h\in\mathbb{C} \text{ with }|h|<\alpha_1(\Lambda,\beta),
\end{equation}
where we have used the fact that $\ln Z_{\Lambda,\beta,h}$ is analytic in $\{h\in\mathbb{C}:|h|<\alpha_1(\Lambda,\beta)\}$.

For any $k\in\mathbb{N}$ and $v_1,\dots,v_k\in \Lambda$, the \textbf{Ursell function} $u_k^{\Lambda,\beta,h}(\sigma_{v_1},\dots,\sigma_{v_k})$ for $\{\sigma_v:v\in\Lambda\}$ from $\mathbb{P}_{\Lambda,\beta,h}$ is defined by
\begin{equation}\label{eq:u_kdef}
	u_k^{\Lambda,\beta,h}(\sigma_{v_1},\dots,\sigma_{v_k}):=\left.\frac{\partial^k}{\partial t_1\dots\partial t_k}\ln\left\langle\exp\left[\sum_{j=1}^k t_j\sigma_{v_j}\right]\right\rangle_{\Lambda,\beta,h}\right|_{t_1=\dots=t_k=0}.
\end{equation}
Comparing \eqref{eq:u_kMdef} and \eqref{eq:u_kdef}, we have
\begin{equation}\label{eq:u_kMsum}
	u_k(M_{\Lambda,\beta,h})=\sum_{v_1,\dots,v_k\in \Lambda}u_k^{\Lambda,\beta,h}(\sigma_{v_1},\dots,\sigma_{v_k}).
\end{equation}
It is known (and can be shown by the GKS inequalities \cite{Gri67,KS68}) that for any $\beta\geq0$, $h\in\mathbb{R}$, $\mathbb{P}_{\Lambda,\beta,h}$ converges weakly as $\Lambda\uparrow\mathbb{Z}^d$ to an infinite-volume measure whose expectation we denote by $\langle\cdot\rangle_{\mathbb{Z}^d,\beta,h}$, and this limit measure is translation invariant. So we can define
\begin{equation}
	u_k^{\mathbb{Z}^d,\beta,h}(\sigma_{v_1},\dots,\sigma_{v_k}):=\left.\frac{\partial^k}{\partial t_1\dots\partial t_k}\ln\left\langle\exp\left[\sum_{j=1}^k t_j\sigma_{v_j}\right]\right\rangle_{\mathbb{Z}^d,\beta,h}\right|_{t_1=\dots=t_k=0};
\end{equation}
the convergence of $\mathbb{P}_{\Lambda,\beta,h}$ implies that
\begin{equation}
	\lim_{\Lambda\uparrow\mathbb{Z}^d}u_k^{\Lambda,\beta,h}(\sigma_{v_1},\dots,\sigma_{v_k})=u_k^{\mathbb{Z}^d,\beta,h}(\sigma_{v_1},\dots,\sigma_{v_k}).
\end{equation}

We first prove that the coefficients of the power series in \eqref{eq:f_VTaylor} converge as $\Lambda\uparrow\mathbb{Z}^d$. Recall that $B_n=[-n,n]^d\cap\mathbb{Z}^d$.
\begin{proposition}\label{prop:b_kdef}
	For any $d\in\mathbb{N}$ and $\beta\geq 0$, we have
	\begin{equation}\label{eq:b_kdef}
		\lim_{n\rightarrow\infty}\frac{u_k(M_{B_n,\beta,0})}{|B_n|}=b_k(\beta):=\sum_{v_1,\dots,v_{k-1}\in\mathbb{Z}^d}	u_k^{\mathbb{Z}^d,\beta,0}(\sigma_0,\sigma_{v_1},\dots,\sigma_{v_{k-1}}), \forall k\in\mathbb{N},
	\end{equation}
where the subscript $0$ in $\sigma_0$ always refers to the origin in $\mathbb{Z}^d$, and the sum over an empty set is $0$, so $b_1(\beta)=0$. 
\end{proposition}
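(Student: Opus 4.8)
The plan is to expand $u_k(M_{B_n,\beta,0})$ via \eqref{eq:u_kMsum} into a sum of Ursell functions and to pass to the limit term by term, using monotonicity rather than any quantitative decay of truncated correlations (which is in any case unavailable at and above $\beta_c(d)$). The odd case is immediate: at $h=0$ the measure $\mathbb{P}_{\Lambda,\beta,0}$, and its infinite-volume limit, is invariant under the global spin flip $\sigma\mapsto-\sigma$, so the joint cumulant generating function in \eqref{eq:u_kdef} is even in $(t_1,\dots,t_k)$ and all odd Ursell functions vanish; hence $u_k(M_{B_n,\beta,0})=0=b_k(\beta)$ for odd $k$. So fix $k=2m$ and write $\varepsilon_m:=(-1)^{m-1}$. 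From \cite{CJN22} I will use that, for all $\beta\ge0$, all finite $\Lambda\subseteq\Lambda'\subseteq\mathbb{Z}^d$ (and also $\Lambda'=\mathbb{Z}^d$), and all $v_1,\dots,v_k\in\Lambda$,
\begin{equation*}
0\le\varepsilon_m\,u_k^{\Lambda,\beta,0}(\sigma_{v_1},\dots,\sigma_{v_k})\le\varepsilon_m\,u_k^{\Lambda',\beta,0}(\sigma_{v_1},\dots,\sigma_{v_k});
\end{equation*}
that is, for fixed $\beta$ and fixed $v_1,\dots,v_k$, the quantity $\varepsilon_m\,u_k^{\Lambda,\beta,0}(\sigma_{v_1},\dots,\sigma_{v_k})$ is nonnegative and nondecreasing in $\Lambda$, and by the convergence of Ursell functions already recalled its supremum over finite $\Lambda$ equals $\varepsilon_m\,u_k^{\mathbb{Z}^d,\beta,0}(\sigma_{v_1},\dots,\sigma_{v_k})$. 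I will also use translation invariance of $\langle\cdot\rangle_{\mathbb{Z}^d,\beta,0}$, which gives $u_k^{\mathbb{Z}^d,\beta,0}(\sigma_{v_1},\dots,\sigma_{v_k})=u_k^{\mathbb{Z}^d,\beta,0}(\sigma_0,\sigma_{v_2-v_1},\dots,\sigma_{v_k-v_1})$, together with the translation covariance of the finite-volume Ursell functions when the box and the points are shifted together. All inequalities below are read in $[0,+\infty]$, so the cases $b_k(\beta)<\infty$ and $b_k(\beta)=+\infty$ are handled simultaneously.

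For the upper bound, multiply \eqref{eq:u_kMsum} by $\varepsilon_m/|B_n|$, bound each term $\varepsilon_m\,u_k^{B_n,\beta,0}(\sigma_{v_1},\dots,\sigma_{v_k})$ above by $\varepsilon_m\,u_k^{\mathbb{Z}^d,\beta,0}(\sigma_{v_1},\dots,\sigma_{v_k})$, rewrite the latter through translation invariance as a function of the differences $v_2-v_1,\dots,v_k-v_1$, carry out the sum over $v_1\in B_n$ (which cancels $1/|B_n|$, since the summand no longer depends on $v_1$), and then enlarge the sum over the differences from a subset of $(B_{2n})^{k-1}$ to all of $(\mathbb{Z}^d)^{k-1}$, which is legitimate because every summand is $\ge0$. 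This yields $\varepsilon_m\,u_k(M_{B_n,\beta,0})/|B_n|\le\varepsilon_m\,b_k(\beta)$ for every $n$, hence $\limsup_n\varepsilon_m\,u_k(M_{B_n,\beta,0})/|B_n|\le\varepsilon_m\,b_k(\beta)$.

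For the lower bound, fix $R\in\mathbb{N}$ and discard from \eqref{eq:u_kMsum} all terms except those with $v_1\in B_{n-R}$ and $v_j\in v_1+B_R$ for $j=2,\dots,k$; this is legitimate by nonnegativity of the terms. For such a tuple $v_1+B_R\subseteq B_n$, so monotonicity in the domain followed by translation covariance gives $\varepsilon_m\,u_k^{B_n,\beta,0}(\sigma_{v_1},\dots)\ge\varepsilon_m\,u_k^{v_1+B_R,\beta,0}(\sigma_{v_1},\dots)=\varepsilon_m\,u_k^{B_R,\beta,0}(\sigma_0,\sigma_{v_2-v_1},\dots,\sigma_{v_k-v_1})$. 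Summing over the retained tuples and dividing by $|B_n|$ gives $\varepsilon_m\,u_k(M_{B_n,\beta,0})/|B_n|\ge\frac{|B_{n-R}|}{|B_n|}\,S_R$, where $S_R:=\sum_{w_1,\dots,w_{k-1}\in B_R}\varepsilon_m\,u_k^{B_R,\beta,0}(\sigma_0,\sigma_{w_1},\dots,\sigma_{w_{k-1}})$. Letting $n\to\infty$ with $R$ fixed (so $|B_{n-R}|/|B_n|\to1$) gives $\liminf_n\varepsilon_m\,u_k(M_{B_n,\beta,0})/|B_n|\ge S_R$; then letting $R\to\infty$, monotone convergence (the summand is nondecreasing in $R$ and converges to $\varepsilon_m\,u_k^{\mathbb{Z}^d,\beta,0}$, and the index set increases to $(\mathbb{Z}^d)^{k-1}$) gives $S_R\uparrow\varepsilon_m\,b_k(\beta)$. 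Combining the two bounds gives $\lim_n u_k(M_{B_n,\beta,0})/|B_n|=b_k(\beta)$, as claimed.

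The one genuine obstacle is the interchange of $n\to\infty$ with the infinite sum defining $b_k(\beta)$: the finite-volume Ursell functions converge with no uniform rate, and for $\beta\ge\beta_c(d)$ the sum diverges, so dominated convergence is unavailable. The argument above circumvents this by using the sign and the monotonicity in the domain of the Ursell functions from \cite{CJN22}, which turn the interchange into a monotone-convergence sandwich valid in $[0,+\infty]$.
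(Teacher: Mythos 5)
Your proof is correct and follows essentially the same route as the paper: the sign and domain-monotonicity of the even Ursell functions from \cite{CJN22} turn the interchange of limits into a two-sided monotone sandwich, with the upper bound obtained by passing to infinite-volume Ursell functions and extending the sum, and the lower bound by restricting the base point to a slightly smaller box and shrinking the domain around it. The only (cosmetic) difference is in the lower bound's parametrization: you use a fixed window $B_R$ around a base point in $B_{n-R}$ and send $R\to\infty$ afterwards, while the paper uses proportional boxes $B_{\lambda n}$ and $B_{(1-\lambda)n}$ and then lets $\lambda\uparrow 1$.
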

\begin{proof}
	It is enough to prove the proposition for even $k$ since both sides of \eqref{eq:b_kdef} are $0$ if $k$ is odd. By the signs and mononoticity of Ursell functions (see \cite{Shl86} and Theorem 1 of \cite{CJN22} respectively), we have
	\begin{align}
		&(-1)^{k-1}u_{2k}^{\Lambda,\beta,h}(\sigma_{v_1},\dots,\sigma_{v_{2k}})\geq 0, \forall v_1,\dots, v_{2k}\in \Lambda,\\
		&(-1)^{k-1}u_{2k}^{\Lambda_1,\beta,h}(\sigma_{v_1},\dots,\sigma_{v_{2k}})\leq (-1)^{k-1}u_{2k}^{\Lambda_2,\beta,h}(\sigma_{v_1},\dots,\sigma_{v_{2k}}), \forall \Lambda_1\subset\Lambda_2, \forall v_1,\dots, v_{2k}\in \Lambda_1.
	\end{align}
These, combined with \eqref{eq:u_kMsum} and translation invariance of $\langle\cdot\rangle_{\mathbb{Z}^d,\beta,h}$, imply that
\begin{align}\label{eq:u_2kupperbd}
	\frac{|u_{2k}(M_{B_n},\beta,0)|}{|B_n|}&=\frac{\sum_{u,v_1,\dots,v_{2k-1}\in B_n}|u_{2k}^{B_n,\beta,0}(\sigma_u,\sigma_{v_1},\dots,\sigma_{v_{2k-1}})|}{|B_n|}\nonumber\\
	&\leq\frac{\sum_{u,v_1,\dots,v_{2k-1}\in B_n}|u_{2k}^{\mathbb{Z}^d,\beta,0}(\sigma_u,\sigma_{v_1},\dots,\sigma_{v_{2k-1}})|}{|B_n|}\nonumber\\
	&\leq\frac{\sum_{u\in B_n; v_1,\dots,v_{2k-1}\in\mathbb{Z}^d}|u_{2k}^{\mathbb{Z}^d,\beta,0}(\sigma_u,\sigma_{v_1},\dots,\sigma_{v_{2k-1}})|}{|B_n|}\nonumber\\
	&= \sum_{v_1,\dots,v_{2k-1}\in \mathbb{Z}^d}|u_{2k}^{\mathbb{Z}^d,\beta,0}(\sigma_0,\sigma_{v_1},\dots,\sigma_{v_{2k-1}})|.
\end{align}	
Similarly, we also have that for any $\lambda\in(0,1)$, and then for large $n$,
\begin{align}
	&\frac{\sum_{u,v_1,\dots,v_{2k-1}\in B_n}|u_{2k}^{B_n,\beta,0}(\sigma_u,\sigma_{v_1},\dots,\sigma_{v_{2k-1}})|}{|B_n|}\nonumber\\
	&\quad \geq\frac{|B_{\lambda n}|\sum_{v_1,\dots,v_{2k-1}\in B_{(1-\lambda)n}}|u_{2k}^{B_{(1-\lambda)n},\beta,0}(\sigma_0,\sigma_{v_1},\dots,\sigma_{v_{2k-1}})|}{|B_n|}\nonumber\\
	&\rightarrow \lambda^d\sum_{v_1,\dots,v_{2k-1}\in \mathbb{Z}^d}|u_{2k}^{\mathbb{Z}^d,\beta,0}(\sigma_0,\sigma_{v_1},\dots,\sigma_{v_{2k-1}})| \text{ as }n\rightarrow\infty.
\end{align}
This, combined with \eqref{eq:u_2kupperbd}, completes the proof of the proposition by letting $\lambda\uparrow 1$.
\end{proof}
\begin{remark}\label{rem:b_k}
	 For $\beta\in[0,\beta_c(d))$, by the exponential decay of truncated two-point functions (see \cite{ABF87}) and the bounds on Ursell functions in terms of truncated two-point functions (see (1.13) of \cite{Leb72}), we know that the RHS of \eqref{eq:b_kdef} is finite in this case. But the RHS of \eqref{eq:b_kdef} might be $\infty$ for $\beta\geq\beta_c(d)$. For example, Simon's inequality \cite{Sim80} implies that
	 \begin{equation}
	 	\sum_{v\in\partial B(n)}\langle\sigma_0\sigma_v\rangle_{\mathbb{Z}^d,\beta_c(d),0}\geq1, \forall n\in \mathbb{N},
	 \end{equation}
 where $\partial B(n)$ is the set of vertices in $B(n)$ which have a nearest-neighbor in $\mathbb{Z}^d\setminus B(n)$. Therefore,
 \begin{equation}
 	\sum_{v\in\mathbb{Z}^d}\langle\sigma_0\sigma_v\rangle_{\mathbb{Z}^d,\beta_c(d),0}=\infty,
 \end{equation}
and by the GKS inequalities, we have
\begin{equation}\label{eq:susdiv}
	\lim_{n\rightarrow\infty}\frac{u_2(M_{B_n,\beta,0})}{|B_n|}=\sum_{v\in\mathbb{Z}^d}\langle\sigma_0\sigma_v\rangle_{\mathbb{Z}^d,\beta,0}=\infty, \forall \beta\geq\beta_c(d).
\end{equation}
\end{remark}

By Remark \ref{rem:b_k}, we know $b_k$ is finite for each $k\in\mathbb{N}$ when $\beta\in[0,\beta_c(d))$. We define
\begin{equation}\label{eq:rdef}
	r(\beta):=\begin{cases}
		1/\limsup_{k\rightarrow\infty}\left[|b_k(\beta)|/k!\right]^{1/k}, &\beta\in[0,\beta_c(d))\\
		0,&\beta\geq\beta_c(d),
	\end{cases}
\end{equation}
where we use the convention $1/0=\infty, 1/\infty=0$. We recall that $\alpha_1(\Lambda,\beta)$ is the modulus of the first zero of $Z_{\Lambda,\beta,h}$. We next prove
\begin{proposition}\label{prop:alpha1limit}
	For any $d\in\mathbb{N}$ and $\beta\geq0$, we have
	\begin{equation}
		\lim_{n\rightarrow\infty}\alpha_1(B_n,\beta)=r(\beta),
	\end{equation}
where $r(\beta)$ is defined in \eqref{eq:rdef}.
\end{proposition}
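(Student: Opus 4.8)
The plan is to identify $\alpha_1(B_n,\beta)$ with the radius of convergence of the power series in \eqref{eq:f_VTaylor}, and then to squeeze that radius between $r(\beta)$ from below (uniformly in $n$) and $r(\beta)$ from above (in the limit). That radius is \emph{exactly} $\alpha_1(B_n,\beta)$: it is $\geq\alpha_1(B_n,\beta)$ because $f_{B_n,\beta}$ is analytic on $\{|h|<\alpha_1(B_n,\beta)\}$, and it is $\leq\alpha_1(B_n,\beta)$ because $Z_{B_n,\beta,h}$ vanishes at $h=i\alpha_1(B_n,\beta)$, so $f_{B_n,\beta}$ has a logarithmic singularity on the boundary circle and admits no analytic continuation there. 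By Cauchy--Hadamard,
\[
\alpha_1(B_n,\beta)=\Bigl(\limsup_{k\to\infty}\Bigl[\frac{|u_k(M_{B_n,\beta,0})|}{|B_n|\,k!}\Bigr]^{1/k}\Bigr)^{-1}.
\]
For the lower bound $\alpha_1(B_n,\beta)\geq r(\beta)$, valid for every $n$, I would reuse the estimate \eqref{eq:u_2kupperbd} already obtained inside the proof of Proposition \ref{prop:b_kdef}: together with the constant sign of the even Ursell functions it gives $|u_k(M_{B_n,\beta,0})|/|B_n|\leq|b_k(\beta)|$ for all $k$ and $n$ (both sides vanish for odd $k$, and $|b_k(\beta)|$ is read as $\infty$ when the corresponding series diverges), and substituting this into the displayed identity yields $\alpha_1(B_n,\beta)\geq\bigl(\limsup_k[|b_k(\beta)|/k!]^{1/k}\bigr)^{-1}\geq r(\beta)$; in particular $\liminf_n\alpha_1(B_n,\beta)\geq r(\beta)$.

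The main task is the matching upper bound $\limsup_n\alpha_1(B_n,\beta)\leq r(\beta)$. Suppose not; then there are $R>r(\beta)$ and a subsequence $(n_j)$ with $\alpha_1(B_{n_j},\beta)>R$. Put $g_n(h):=f_{B_n,\beta}(h)-f_{B_n,\beta}(0)=|B_n|^{-1}\ln\langle e^{hM_{B_n,\beta,0}}\rangle_{B_n,\beta,0}$, analytic on $\{|h|<\alpha_1(B_n,\beta)\}$ with $g_n(0)=0$. The key a priori input, \emph{uniform in $n$}, is $\mathrm{Re}\,g_n(h)\leq|h|$ for all $h\in\mathbb{C}$, which follows from $|\langle e^{hM_{B_n,\beta,0}}\rangle_{B_n,\beta,0}|\leq\langle e^{(\mathrm{Re}\,h)M_{B_n,\beta,0}}\rangle_{B_n,\beta,0}\leq e^{|h|\,|B_n|}$. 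Fix $\delta\in(0,R)$. Since $g_{n_j}$ is analytic on the closed disk of radius $R-\delta/2<R<\alpha_1(B_{n_j},\beta)$, vanishes at $0$, and has real part $\leq R-\delta/2$ there, the Borel--Carath\'eodory inequality gives $\sup_{|h|\leq R-\delta}|g_{n_j}(h)|\leq C_\delta$ with $C_\delta:=4(R-\delta)(R-\delta/2)/\delta$ independent of $j$; then, since the coefficients in \eqref{eq:f_VTaylor} are the Taylor coefficients of $g_{n_j}$, Cauchy's estimates give $|u_k(M_{B_{n_j},\beta,0})|/(|B_{n_j}|\,k!)\leq C_\delta(R-\delta)^{-k}$ for all $k$ and $j$. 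Letting $j\to\infty$ and invoking Proposition \ref{prop:b_kdef} forces every $b_k(\beta)$ to be finite (hence $\beta\in[0,\beta_c(d))$ by Remark \ref{rem:b_k}) and gives $|b_k(\beta)|/k!\leq C_\delta(R-\delta)^{-k}$ for all $k$, so $\limsup_k[|b_k(\beta)|/k!]^{1/k}\leq(R-\delta)^{-1}$, i.e.\ $r(\beta)\geq R-\delta$. Letting $\delta\downarrow0$ gives $r(\beta)\geq R>r(\beta)$, a contradiction; hence $\limsup_n\alpha_1(B_n,\beta)\leq r(\beta)$, and combined with the lower bound, $\lim_n\alpha_1(B_n,\beta)=r(\beta)$.

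The step I expect to be the real obstacle is this interchange of $\lim_n$ and $\limsup_k$: pointwise convergence $u_k(M_{B_n,\beta,0})/(|B_n|\,k!)\to b_k(\beta)/k!$ in $k$ carries no control of $\limsup_k$ that is uniform in $n$, so one must convert a lower bound on the radius of convergence of $f_{B_n,\beta}$ into a bound on \emph{all} of its Taylor coefficients that is uniform in both $j$ and $k$; the uniform estimate $\mathrm{Re}\,g_n(h)\leq|h|$ combined with Borel--Carath\'eodory is precisely what accomplishes this. One should also keep the conventions $1/0=\infty$, $1/\infty=0$ in mind so that the whole argument specializes correctly to $\beta\geq\beta_c(d)$: there $r(\beta)=0$, the contradiction argument never gets started since $b_2(\beta)=\infty$, and the conclusion reduces to $\alpha_1(B_n,\beta)\to0$.
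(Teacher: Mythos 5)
Your proof is correct, and while the lower bound $\alpha_1(B_n,\beta)\geq r(\beta)$ is obtained exactly as in the paper (radius of convergence of \eqref{eq:f_VTaylor} equals $\alpha_1(B_n,\beta)$, then the monotonicity bound \eqref{eq:u_2kupperbd} giving $|u_k(M_{B_n,\beta,0})|/|B_n|\leq |b_k(\beta)|$), your upper bound takes a genuinely different route. The paper gets the required volume-uniform coefficient bound from the Hadamard factorization of $Z_{\Lambda,\beta,h}$, which yields the identity \eqref{eq:u_kMandalpha} expressing $u_{2k}(M_{\Lambda,\beta,0})$ as a power sum over the Lee--Yang zeros, and then exploits the $2\pi$-periodicity of $Z_{\Lambda,\beta,ih}$ (with exactly $2|\Lambda|$ zeros per period, as in \cite{HJN22}) to bound that power sum by $4|\Lambda|\,\alpha_1^{-2k}$, giving \eqref{eq:u_2kupperbd1}; the case $\beta\geq\beta_c(d)$ is then handled directly by combining the $k=1$ instance with \eqref{eq:susdiv}. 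You instead avoid the factorization and periodicity entirely: the a priori estimate $\mathrm{Re}\,g_n(h)\leq|h|$, uniform in $n$ after volume normalization, feeds into Borel--Carath\'eodory and Cauchy estimates to produce coefficient bounds uniform in the volume on any disk where $f_{B_n,\beta}$ is analytic, and a single contradiction argument then covers both $\beta<\beta_c(d)$ and $\beta\geq\beta_c(d)$ (in the latter case via $b_2(\beta)=\infty$ from \eqref{eq:susdiv}). You correctly identify the crux --- one cannot interchange $\lim_n$ with $\limsup_k$ without a bound uniform in both indices --- and your softer complex-analytic route supplies it; what it gives up relative to the paper is the explicit constant $\frac{4(2k)!}{k}\alpha_1^{-2k}$ and the structural identity linking cumulants to power sums of the zeros, which the paper gets for free. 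One very minor point: the radius of convergence being exactly (not merely at least) $\alpha_1(B_n,\beta)$, which you justify via the logarithmic singularity of $\ln Z$ at the first zero, is only actually needed for the lower bound, where the inequality you use goes in the direction that requires it, so your argument is complete as written.
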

\begin{remark}
	This proposition should be contrasted to a similar result for the antiferromagnetic Ising model due to Lebowitz; see (4.3) of \cite{Pen63} or Chapter 6 of \cite{Mcc15}. 
\end{remark}
\begin{proof}[Proof of Proposition \ref{prop:alpha1limit}]
	The Hadamard factorization theorem and the fact that $Z_{\Lambda,\beta,h}$ is a function of $h$  of exponential order $1$ imply that
	\begin{equation}
		Z_{\Lambda,\beta,h}=Z_{\Lambda,\beta,0}\prod_{j=1}^{\infty}\left(1+\frac{h^2}{\alpha_j^2(\Lambda,\beta)}\right), \forall h\in\mathbb{C},
	\end{equation}
where $0<\alpha_1(\Lambda,\beta)\leq\alpha_2(\Lambda,\beta)\leq\dots$ are all the positive zeros of $Z_{\Lambda,\beta,ih}$ as a function of $h$ (listed according to their multiplicities), and $\sum_{j=1}^{\infty}\alpha_j^{-2}(\Lambda,\beta)<\infty$; see Lemma~1 of~\cite{HJN22}.
Combining this with \eqref{eq:f_Vdef} and \eqref{eq:f_VTaylor}, we have
	\begin{equation}
		\sum_{k=1}^{\infty}\frac{u_k(M_{\Lambda,\beta,0})}{|\Lambda|}\frac{h^k}{k!}=\ln\prod_{j=1}^{\infty}\left(1+\frac{h^2}{\alpha_j^2(\Lambda,\beta)}\right), \forall h\in\mathbb{C} \text{ with }|h|<\alpha_1(\Lambda,\beta).
	\end{equation}
Using the Taylor series for $\ln(1+x)$, we get
\begin{equation}\label{eq:u_kMandalpha}
	u_{2k-1}(M_{\Lambda,\beta,0})=0, u_{2k}(M_{\Lambda,\beta,0})=(-1)^{k-1}\frac{(2k)!}{k}\sum_{j=1}^{\infty}\frac{1}{\alpha_j^{2k}(\Lambda,\beta)}, \forall k\in\mathbb{N}.
\end{equation}
Note that, as a function of $h$, $Z_{\Lambda,\beta,ih}$ has a period $2\pi$ and it has exactly $2|\Lambda|$ zeros in $[0,2\pi)$; see the discussion before Theorem 1 of \cite{HJN22} for more details. So we have (to simplify notation, we write $\alpha_j$ for $\alpha_j(\Lambda,\beta)$ )
\begin{align}\label{eq:sumalphsub}
	\sum_{j=1}^{\infty}\frac{1}{\alpha_j^{2k}}=\sum_{l=0}^{\infty}\sum_{m=1}^{2|\Lambda|}\frac{1}{(\alpha_m+2l\pi)^{2k}}\leq\sum_{l=0}^{\infty}\frac{2|\Lambda|}{(\alpha_1+2l\pi)^{2k}}\leq 4|\Lambda|\alpha_1^{-2k}, \forall k\in\mathbb{N}.
\end{align}
The last inequality in \eqref{eq:sumalphsub} follows, after dividing by $\alpha_1^{-2k}$, by using that
\begin{equation}
	\alpha_1<2\pi \text{ and } \sum_{l=0}^{\infty}\frac{1}{(1+l)^2}=\frac{\pi^2}{6}<2.
\end{equation}
This, combined with \eqref{eq:u_kMandalpha}, implies that
\begin{equation}\label{eq:u_2kupperbd1}
	|u_{2k}(M_{\Lambda,\beta,0})|\leq\frac{(2k)!}{k}4|\Lambda|[\alpha_1(\Lambda,\beta)]^{-2k}, \forall k\in\mathbb{N}.
\end{equation}
Setting $\Lambda=B_n$ and using \eqref{eq:susdiv}, we have
\begin{equation}
	\infty=\liminf_{n\rightarrow\infty}\frac{u_2(M_{B_n,\beta,0})}{|B_n|}\leq 8\liminf_{n\rightarrow\infty}[\alpha_1(B_n,\beta)]^{-2}, \forall \beta\geq\beta_c(d),
\end{equation}
which of course implies that
\begin{equation}\label{eq:alphalimitlarge}
	\lim_{n\rightarrow\infty}\alpha_1(B_n,\beta)=0, \forall \beta\geq\beta_c(d).
\end{equation}
Since the power series in \eqref{eq:f_VTaylor} has radius of convergence $\alpha_1(\Lambda,\beta)$, we have
\begin{equation}
	\frac{1}{\alpha_1(\Lambda,\beta)}=\limsup_{k\rightarrow\infty}\left[\frac{|u_k(M_{\Lambda,\beta,0})|}{|\Lambda|}\frac{1}{k!}\right]^{1/k}=\limsup_{k\rightarrow\infty}\left[\frac{|u_{2k}(M_{\Lambda,\beta,0})|}{|\Lambda|}\frac{1}{(2k)!}\right]^{1/(2k)},
\end{equation}
where we have used the first equation in \eqref{eq:u_kMandalpha} in the last equality. Combining this with \eqref{eq:u_2kupperbd}, \eqref{eq:b_kdef} and \eqref{eq:rdef} (note that $b_k(\beta)=0$ for odd $k$), we get
\begin{equation}\label{eq:alpha_1lower}
	\frac{1}{\alpha_1(B_n,\beta)}\leq\limsup_{k\rightarrow\infty}\left[\frac{|b_{2k}(\beta)|}{(2k)!}\right]^{1/(2k)}=\frac{1}{r(\beta)}, \forall \beta\in[0,\beta_c(d)).
\end{equation}
Proposition \ref{prop:b_kdef} and \eqref{eq:u_2kupperbd1} imply that
\begin{align}
	|b_{2k}(\beta)|&=\liminf_{n\rightarrow\infty}\frac{|u_{2k}(M_{B_n,\beta,0})|}{|B_n|}\leq\liminf_{n\rightarrow\infty}\frac{4(2k)!}{k}[\alpha_1(B_n,\beta)]^{-2k}\nonumber\\
	&=\frac{4(2k)!}{k}[\limsup_{n\rightarrow\infty}\alpha_1(B_n,\beta)]^{-2k}.
\end{align}
Hence,
\begin{align}
	\frac{1}{r(\beta)}&=\limsup_{k\rightarrow\infty}\left[\frac{|b_{2k}(\beta)|}{(2k)!}\right]^{1/(2k)}\leq\limsup_{k\rightarrow\infty}\left[\frac{4}{k}\right]^{1/(2k)}[\limsup_{n\rightarrow\infty}\alpha_1(B_n,\beta)]^{-1}\nonumber\\
	&=[\limsup_{n\rightarrow\infty}\alpha_1(B_n,\beta)]^{-1}.
\end{align}
Combining this with \eqref{eq:alpha_1lower}, we obtain
\begin{equation}
	\lim_{n\rightarrow\infty}\alpha_1(B_n,\beta)=r(\beta), \forall \beta\in[0,\beta_c(d)).
\end{equation}
This and \eqref{eq:alphalimitlarge} complete the proof of the proposition.
\end{proof}
The following theorem says that for $\beta\in[0,\beta_c(d))$, the free energy $f_{\beta}$ is infinitely differentiable in $h$ and the derivatives of $f_{B_n,\beta}$ converge to those of $f_{\beta}$.
\begin{theorem}[\cite{Leb72} and \cite{ABF87}]\label{thm:Leb}
	For any $d\in\mathbb{N}$ and $\beta\in[0,\beta_c(d))$, $f_{\beta}$ is infinitely differentiable in $h\in\mathbb{R}$. Moreover, for each $k\in\mathbb{N}$,
	\begin{equation}\label{eq:f_Vdiffconv}
		\lim_{n\rightarrow\infty}\frac{d^k f_{B_n,\beta}}{d h^k}(h)=\frac{d^k f_{\beta}}{d h^k}(h) \text{ uniformly in any compact subset of }\mathbb{R}.
	\end{equation}
\end{theorem}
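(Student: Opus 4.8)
\emph{Proof plan.} Each $f_{B_n,\beta}$ is real-analytic in $h$ (since $Z_{B_n,\beta,h}>0$ for $h\in\mathbb R$), and by \eqref{eq:f_Vdef} together with $u_k(M_{\Lambda,\beta,h})=d^k\ln Z_{\Lambda,\beta,h}/dh^k$ we have $f_{B_n,\beta}^{(k)}(h)=u_k(M_{B_n,\beta,h})/|B_n|$. We already know $f_{B_n,\beta}\to f_\beta$ pointwise on $\mathbb R$. The plan is: (i) show that for every $k\in\mathbb N$ and every $\beta<\beta_c(d)$ there is a constant $C_k(\beta)<\infty$ with $|f_{B_n,\beta}^{(k)}(h)|\le C_k(\beta)$ for all $n$ and all $h\in\mathbb R$; and then (ii) combine this uniform bound with the pointwise convergence and the Arzel\`a--Ascoli theorem to conclude that $f_\beta\in C^\infty(\mathbb R)$ and that all derivatives converge locally uniformly.

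The substance is step (i). By the spin-flip symmetry $\sigma\mapsto-\sigma$, $h\mapsto-h$ one has $u_k(M_{\Lambda,\beta,-h})=(-1)^k u_k(M_{\Lambda,\beta,h})$, so it suffices to treat $h\ge 0$. The case $k=1$ is trivial, since $f_{B_n,\beta}'(h)=|B_n|^{-1}\sum_{v\in B_n}\langle\sigma_v\rangle_{B_n,\beta,h}\in[-1,1]$. For $k\ge 2$ use \eqref{eq:u_kMsum} together with the bound on Ursell functions in terms of truncated two-point functions from \cite{Leb72} (inequality (1.13)), which for $h\ge 0$ dominates $|u_k^{B_n,\beta,h}(\sigma_{v_1},\dots,\sigma_{v_k})|$ by a $k$-dependent combinatorial constant times a sum, over tree graphs $T$ on the vertex set $\{v_1,\dots,v_k\}$, of $\prod_{\{a,b\}\in T}\langle\sigma_a;\sigma_b\rangle_{B_n,\beta,h}$, where $\langle\,\cdot\,;\,\cdot\,\rangle$ is the truncated two-point function. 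By the GHS inequality, $\partial_h\langle\sigma_a;\sigma_b\rangle_{B_n,\beta,h}=\sum_{c\in B_n}u_3^{B_n,\beta,h}(\sigma_a,\sigma_b,\sigma_c)\le 0$, so $\langle\sigma_a;\sigma_b\rangle_{B_n,\beta,h}$ is non-negative and non-increasing in $h\ge 0$, hence at most $\langle\sigma_a\sigma_b\rangle_{B_n,\beta,0}$; by the second GKS inequality this is at most $\langle\sigma_a\sigma_b\rangle_{\mathbb Z^d,\beta,0}$. Now fix $v_1$ and sum the product over tree edges over $v_2,\dots,v_k\in\mathbb Z^d$ by repeatedly summing out a leaf of $T$: each such summation contributes a factor $\chi(\beta,d):=\sum_{x\in\mathbb Z^d}\langle\sigma_0\sigma_x\rangle_{\mathbb Z^d,\beta,0}$, the susceptibility, which is finite for $\beta<\beta_c(d)$ by \cite{ABF87}, so the sum over $v_2,\dots,v_k$ is at most $\chi(\beta,d)^{k-1}$. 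Summing over $v_1\in B_n$, over the finitely many trees, and dividing by $|B_n|$ then yields $|f_{B_n,\beta}^{(k)}(h)|\le C_k(\beta)<\infty$, a constant depending only on $k$ and $\beta$, uniformly in $n$ and in $h$.

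For step (ii), fix a compact interval $I\subset\mathbb R$ and $k\in\mathbb N$. By (i), $\sup_n\|f_{B_n,\beta}^{(j)}\|_{C(I)}\le C_j(\beta)$ for $1\le j\le k+1$, and $\sup_n\|f_{B_n,\beta}\|_{C(I)}<\infty$ because $f_{B_n,\beta}(h)=f_{B_n,\beta}(0)+\int_0^h f_{B_n,\beta}'(t)\,dt$ with $|f_{B_n,\beta}'|\le C_1(\beta)$ and $f_{B_n,\beta}(0)\to f_\beta(0)$. Thus $\{f_{B_n,\beta}\}_n$ is bounded in $C^{k+1}(I)$, so by the Arzel\`a--Ascoli theorem it is precompact in $C^k(I)$. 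Any $C^k(I)$-limit of a subsequence is in particular a uniform, hence pointwise, limit, so it must equal $f_\beta$; therefore $f_\beta\in C^k(I)$, it is the unique accumulation point, and $f_{B_n,\beta}\to f_\beta$ in $C^k(I)$, i.e.\ $f_{B_n,\beta}^{(j)}\to f_\beta^{(j)}$ uniformly on $I$ for every $j\le k$. Since $k$ and $I$ were arbitrary, $f_\beta\in C^\infty(\mathbb R)$ and \eqref{eq:f_Vdiffconv} holds.

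The only real obstacle is step (i): one needs uniform (in the volume) control of Ursell functions of arbitrary order at an arbitrary real field $h$. The two imported ingredients do exactly this --- the bound (1.13) of \cite{Leb72} reduces everything to the truncated two-point function, and the finiteness of the susceptibility below $\beta_c(d)$ (equivalently, the exponential decay of the truncated two-point function) from the sharpness result of \cite{ABF87} makes the resulting tree sums converge. The points that require care are the reduction to $h\ge 0$ via spin-flip symmetry and the monotonicity $\partial_h\langle\sigma_a;\sigma_b\rangle_{B_n,\beta,h}\le 0$ from GHS, which is what lets us replace correlations at positive field by the summable correlations at $h=0$; everything after step (i) is soft functional analysis.
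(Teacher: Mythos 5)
Your proof is correct and follows essentially the same route as the paper, whose proof of this theorem is a citation to exactly this argument: Lebowitz's bounds (his (1.13)) on Ursell functions via truncated two-point functions, combined with GHS monotonicity in $h$ and the finiteness of the susceptibility below $\beta_c(d)$ from \cite{ABF87}, give volume- and field-uniform derivative bounds, and the Arzel\`a--Ascoli upgrade from pointwise to $C^k$ convergence is precisely Lemma II.12.9 of \cite{Sim93}. You have simply written out in full what the paper delegates to the references.
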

\begin{proof}
	It is proved in \cite{Leb72} that $f_{\beta}$ is infinitely differentiable in $h$ whenever the infinite-volume truncated two-point functions decay exponentially; the latter is proved in \cite{ABF87} (see also \cite{DCT16} for an alternative proof). See also Section II.12 of \cite{Sim93} for more details; in particular, see Lemma II.12.9 of \cite{Sim93} for the argument which leads to \eqref{eq:f_Vdiffconv} by using the bounds on the derivatives of $f_{B_n,\beta}$ from \cite{Leb72}.
\end{proof}

We next relate $r(\beta)$ to the free energy $f_{\beta}$ for $\beta\in[0,\beta_c(d))$. 
\begin{proposition}\label{prop:fTaylor}
	For any $d\in\mathbb{N}$ and any $\beta\in[0,\beta_c(d))$, we have
	\begin{equation}\label{eq:fTaylor}
		f_{\beta}(h)=f_{\beta}(0)+\sum_{k=1}^{\infty}b_k(\beta)\frac{h^k}{k!}, \forall h\in\mathbb{C} \text{ with }|h|<r(\beta),
	\end{equation}
where $b_k(\beta)$ are defined in \eqref{eq:b_kdef} and $r(\beta)$ is defined in \eqref{eq:rdef}. Moreover, $r(\beta)>0$ is the radius of the largest disk centered at the origin where $f_{\beta}$ is analytic.
\end{proposition}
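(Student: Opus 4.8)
The plan is to combine the finite-volume expansion \eqref{eq:f_VTaylor} with the convergence of its coefficients and the analyticity input of \cite{Ott20}. First, I would identify the Taylor coefficients of $f_\beta$ at the origin. By Theorem~\ref{thm:Leb}, $f_\beta\in C^\infty(\mathbb{R})$ and $\frac{d^k f_{B_n,\beta}}{dh^k}(h)\to\frac{d^k f_\beta}{dh^k}(h)$ uniformly near $h=0$; evaluating at $h=0$, using $\frac{d^k f_{B_n,\beta}}{dh^k}(0)=u_k(M_{B_n,\beta,0})/|B_n|$, and applying Proposition~\ref{prop:b_kdef} gives $\frac{d^k f_\beta}{dh^k}(0)=b_k(\beta)$ for every $k\in\mathbb{N}$. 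By the Cauchy--Hadamard formula and \eqref{eq:rdef}, the power series $f_\beta(0)+\sum_{k\ge1}b_k(\beta)h^k/k!$ has radius of convergence exactly $r(\beta)$. To see that $r(\beta)>0$ (equivalently, that $|b_k(\beta)|$ grows no faster than $C^k k!$ for some $C$), I would invoke the analyticity of $f_\beta$ in $h$ for $\beta\in[0,\beta_c(d))$ proved in \cite{Ott20}: on any disk $\{|h|<\delta\}$ on which $f_\beta$ is analytic its Taylor series converges, so $\limsup_k[|b_k(\beta)|/k!]^{1/k}\le 1/\delta$ and hence $r(\beta)\ge\delta>0$.

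Next I would prove the identity \eqref{eq:fTaylor}. On the real segment this follows from passing to the limit in \eqref{eq:f_VTaylor}. Fix real $h_0$ with $|h_0|<r(\beta)$ and pick $\rho$ with $|h_0|<\rho<r(\beta)$. By Proposition~\ref{prop:alpha1limit}, $\alpha_1(B_n,\beta)\to r(\beta)$, so $\alpha_1(B_n,\beta)\ge\rho$ for all large $n$, and then \eqref{eq:f_VTaylor} applies at $h_0$; moreover the odd cumulants vanish. The bound \eqref{eq:u_2kupperbd1} gives $|B_n|^{-1}|u_{2k}(M_{B_n,\beta,0})|\,|h_0|^{2k}/(2k)!\le(4/k)(|h_0|/\rho)^{2k}$ for all large $n$ and all $k$, a bound summable in $k$ and independent of $n$. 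Combined with the termwise convergence $|B_n|^{-1}u_{2k}(M_{B_n,\beta,0})\to b_{2k}(\beta)$ from Proposition~\ref{prop:b_kdef}, with $f_{B_n,\beta}(0)\to f_\beta(0)$ and $f_{B_n,\beta}(h_0)\to f_\beta(h_0)$, a dominated-convergence argument for series permits passing to the limit term by term in \eqref{eq:f_VTaylor}, yielding $f_\beta(h_0)=f_\beta(0)+\sum_{k\ge1}b_k(\beta)h_0^k/k!$. Since the right-hand side is a power series convergent throughout the complex disk $\{|h|<r(\beta)\}$, it furnishes the analytic continuation of $f_\beta$ to that disk, establishing \eqref{eq:fTaylor}.

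Finally, for the extremal characterization: $f_\beta$ is analytic on $\{|h|<r(\beta)\}$ since it is given there by a convergent power series, and if it were analytic on some disk $\{|h|<R'\}$ with $R'>r(\beta)$, then on that disk it would equal its Taylor series at $0$, which (analytic continuation preserving Taylor coefficients) is again $\sum_{k\ge1}b_k(\beta)h^k/k!$; this series would then converge on $\{|h|<R'\}$, contradicting that its radius of convergence is $r(\beta)$. Hence $r(\beta)$ is the radius of the largest disk centered at $0$ on which $f_\beta$ is analytic. I expect the only genuinely hard ingredient to be the positivity $r(\beta)>0$, i.e.\ the control of the growth of $|b_k(\beta)|$: the coefficient identification via Theorem~\ref{thm:Leb}, the termwise passage to the limit using \eqref{eq:u_2kupperbd1}, and the Cauchy--Hadamard bookkeeping are all soft, whereas the growth bound rests on the substantial analyticity result of \cite{Ott20} (equivalently, on quantitative Lebowitz-type bounds on Ursell functions, see (1.13) of \cite{Leb72}, together with the exponential decay of truncated two-point functions from \cite{ABF87}).
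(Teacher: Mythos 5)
Your proposal is correct and follows essentially the same route as the paper: Ott's analyticity result gives $r(\beta)>0$, Theorem~\ref{thm:Leb} together with Proposition~\ref{prop:b_kdef} identifies $f_\beta^{(k)}(0)=b_k(\beta)$, and Cauchy--Hadamard plus \eqref{eq:rdef} yields the radius and the maximality of the disk. The only addition is your dominated-convergence passage to the limit in \eqref{eq:f_VTaylor} (via Proposition~\ref{prop:alpha1limit} and \eqref{eq:u_2kupperbd1}) to establish the identity on the whole real segment, a step the paper handles implicitly by analytic continuation from a neighborhood of the origin; this is a harmless and arguably more explicit refinement rather than a different argument.
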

\begin{proof}
	Corollary 1.4 of \cite{Ott20} says that $f_{\beta}$ is analytic in a neighborhood of $h=0$. As a result, we have
	\begin{equation}\label{eq:fTaylor1}
		f_{\beta}(h)=f_{\beta}(0)+\sum_{k=1}^{\infty}\frac{f_{\beta}^{(k)}(0)}{k!}h^k, \forall h \text{ in some neighborhood of }h=0.
	\end{equation}
Theorem \ref{thm:Leb}, \eqref{eq:f_VTaylor} and Proposition \ref{prop:b_kdef} give
\begin{equation}\label{eq:fTaylorcoeff}
	f_{\beta}^{(k)}(0)=\lim_{n\rightarrow\infty}f_{B_n,\beta}^{(k)}(0)=\lim_{n\rightarrow\infty}\frac{u_k(M_{B_n,\beta,0})}{|B_n|}=b_k(\beta), \forall k\in\mathbb{N}.
\end{equation}	
This completes the proof of \eqref{eq:fTaylor}. The second part of the proposition is clear from \eqref{eq:fTaylor1}, \eqref{eq:fTaylorcoeff} and \eqref{eq:rdef}.
\end{proof}

It is known that the free energy $f_{\beta}$ is not differentiable at $h=0$ if $\beta\in(\beta_c(d),\infty)$; see, e.g., Theorems 3.25 and 3.34 of \cite{FV18} for a proof. It is also known that $f_{\beta_c(d)}$ is differentiable at $h=0$ ; see, e.g., Theorem 3.34 of \cite{FV18} together with \cite{Yan52} for $d=2$, \cite{AF86} for $d\geq 4$, and \cite{ADCS15} for $d=3$. It is expected that $f_{\beta_c(d)}$ is not twice differentiable at $h=0$ because the susceptibility diverges at $\beta_c(d)$. But we didn't find a proof in the literature. So for completeness, we include a proof here.
\begin{lemma}\label{lem:fnottwice}
	$f_{\beta_c(d)}$  is not twice differentiable at $h=0$. For each $\beta\geq\beta_c(d)$, $f_{\beta}$ is not analytic in any neighborhood of $h=0$.
\end{lemma}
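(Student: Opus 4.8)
The plan is to prove the sharper assertion that $f_{\beta_c(d)}$ fails to be twice differentiable at $h=0$; non-analyticity at $\beta=\beta_c(d)$ follows at once, and for $\beta>\beta_c(d)$ non-analyticity is immediate from the already-recalled fact that $f_\beta$ is not even differentiable at $h=0$. I would argue by contradiction, playing off the divergence of the finite-volume susceptibilities $u_2(M_{B_n,\beta_c(d),0})/|B_n|=f_{B_n,\beta_c(d)}''(0)\to\infty$ (this is \eqref{eq:susdiv}, already established above via Simon's inequality) against a one-sided bound $f_{B_n,\beta_c(d)}'(h)\le f_{\beta_c(d)}'(h)$ for $h>0$ coming from the second GKS inequality.

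Concretely: $f_{\beta_c(d)}$ is even and convex, being a pointwise limit of even convex functions; it is differentiable at every $h\neq0$ by uniqueness of the infinite-volume Gibbs state in nonzero field, and differentiable at $h=0$ by the fact recalled just before the lemma, so $f_{\beta_c(d)}'(0)=0$. Suppose toward a contradiction that $c:=f_{\beta_c(d)}''(0)$ exists, so that $c=\lim_{h\downarrow0}f_{\beta_c(d)}'(h)/h$. Fix $n$. For $h>0$ the second GKS inequality gives $\langle\sigma_v\rangle_{B_n,\beta_c(d),h}\le\langle\sigma_v\rangle_{\mathbb{Z}^d,\beta_c(d),h}=\langle\sigma_0\rangle_{\mathbb{Z}^d,\beta_c(d),h}$ for every $v\in B_n$ (adding ferromagnetic bonds raises one-point functions when $h\ge0$; the last equality is translation invariance of the limit measure). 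Averaging over $v\in B_n$ and using the standard identification of $f_\beta'$ with the infinite-volume magnetization at points of differentiability (see \cite{FV18}), we get $f_{B_n,\beta_c(d)}'(h)=|B_n|^{-1}\sum_{v\in B_n}\langle\sigma_v\rangle_{B_n,\beta_c(d),h}\le\langle\sigma_0\rangle_{\mathbb{Z}^d,\beta_c(d),h}=f_{\beta_c(d)}'(h)$. Since $Z_{B_n,\beta_c(d),h}$ is a strictly positive linear combination of exponentials of $h$, $f_{B_n,\beta_c(d)}$ is real-analytic and even on $\mathbb{R}$, so $f_{B_n,\beta_c(d)}'(0)=0$ and $f_{B_n,\beta_c(d)}'(h)/h\to f_{B_n,\beta_c(d)}''(0)$ as $h\downarrow0$. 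Dividing the inequality above by $h>0$ and letting $h\downarrow0$ yields $c\ge f_{B_n,\beta_c(d)}''(0)$ for every $n$, and then \eqref{eq:susdiv} forces $c=\infty$, a contradiction.

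The ingredients are either standard or already in the paper: convexity and evenness of $f_\beta$ and its $C^1$-ness away from $h=0$ (and at $h=0$ when $\beta=\beta_c(d)$); the elementary fact that $Z_{B_n,\beta,h}=\sum_k c_k\,e^{kh}$ with all $c_k>0$; the monotonicity of one-point functions under addition of ferromagnetic bonds; and the divergence \eqref{eq:susdiv}. The one step that warrants a careful word is the identity $f_{\beta_c(d)}'(h)=\langle\sigma_0\rangle_{\mathbb{Z}^d,\beta_c(d),h}$ for $h>0$, which follows from $f_{B_n,\beta}'\to f_\beta'$ at differentiability points of $f_\beta$ (a general property of pointwise-convergent convex functions) together with $|B_n|^{-1}\sum_{v\in B_n}\langle\sigma_v\rangle_{B_n,\beta,h}\to\langle\sigma_0\rangle_{\mathbb{Z}^d,\beta,h}$ (from the weak convergence of $\mathbb{P}_{\Lambda,\beta,h}$ and the second GKS inequality, the boundary contribution being negligible); being entirely routine, I do not anticipate any genuine obstacle here. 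Note that the argument in fact shows $\langle\sigma_0\rangle_{\mathbb{Z}^d,\beta_c(d),h}/h\to\infty$ as $h\downarrow0$, i.e.\ the zero-field susceptibility is already infinite when approached through positive fields --- precisely the mechanism one expects behind the non-smoothness of $f_{\beta_c(d)}$ at the origin.
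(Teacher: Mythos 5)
Your argument is correct and is essentially the paper's own proof: both hinge on the GKS domination of the finite-volume magnetization by the infinite-volume one for $h>0$, the identity $f_{\beta_c(d)}'(h)=\langle\sigma_0\rangle_{\mathbb{Z}^d,\beta_c(d),h}$, and the divergence of the finite-volume susceptibility in \eqref{eq:susdiv}. The only (cosmetic) differences are that you run it as a contradiction using the averaged magnetization $f_{B_n,\beta_c(d)}'(h)=|B_n|^{-1}\sum_{v}\langle\sigma_v\rangle_{B_n,\beta_c(d),h}$ and the real-analyticity of $f_{B_n,\beta_c(d)}$ to pass to $f_{B_n,\beta_c(d)}''(0)$, whereas the paper works with the single-site magnetization $\langle\sigma_0\rangle_{B_n,\beta_c(d),h}$ and the mean value theorem.
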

\begin{proof}
	We recalled right before the lemma that $f_{\beta_c(d)}$ is differentiable at $h=0$. Theorem~3.34 and Proposition 3.29 of \cite{FV18} imply that
	\begin{equation}\label{eq:fdiff}
		f_{\beta_c(d)}^{\prime}(h)=\langle\sigma_0\rangle_{\mathbb{Z}^d,\beta_c(d),h}, \forall h\in\mathbb{R}.
	\end{equation}
Here, we have used the fact that the Gibbs state is unique at $\beta=\beta_c(d)$ and any $h\in\mathbb{R}$. The GKS inequalities imply that
\begin{equation}
	\langle\sigma_0\rangle_{B_n,\beta_c(d),h}\uparrow\langle\sigma_0\rangle_{\mathbb{Z}^d,\beta_c(d),h} \text{ as }n\uparrow\infty, \forall h\geq 0.
\end{equation}
For any fixed $h>0$, by the mean value theorem, there exists $h_0\in(0,h)$ such that
\begin{align}
	\frac{\langle\sigma_0\rangle_{\mathbb{Z}^d,\beta_c(d),h}-\langle\sigma_0\rangle_{\mathbb{Z}^d,\beta_c(d),0}}{h}&=\frac{\langle\sigma_0\rangle_{\mathbb{Z}^d,\beta_c(d),h}}{h}\geq\frac{\langle\sigma_0\rangle_{B_n,\beta_c(d),h}}{h}=\left.\frac{d\langle\sigma_0\rangle_{B_n,\beta_c(d),h}}{dh}\right|_{h=h_0}\nonumber\\
	&=\sum_{v\in B_n}\left[\langle\sigma_0\sigma_v\rangle_{B_n,\beta_c(d),h_0}-\langle\sigma_0\rangle_{B_n,\beta_c(d),h_0}\langle\sigma_v\rangle_{B_n,\beta_c(d),h_0}\right],
\end{align}
where we have used $\langle\sigma_0\rangle_{\mathbb{Z}^d,\beta_c(d),0}=\langle\sigma_0\rangle_{B_n,\beta_c(d),0}=0$.
Taking lim inf, we obtain
\begin{equation}
	\liminf_{h\downarrow0}\frac{\langle\sigma_0\rangle_{\mathbb{Z}^d,\beta_c(d),h}-\langle\sigma_0\rangle_{\mathbb{Z}^d,\beta_c(d),0}}{h}\geq \sum_{v\in B_n}\langle\sigma_0\sigma_v\rangle_{B_n,\beta_c(d),0}.
\end{equation}
Letting $n\rightarrow\infty$ and applying \eqref{eq:susdiv}, we have
\begin{equation}
	\liminf_{h\downarrow0}\frac{\langle\sigma_0\rangle_{\mathbb{Z}^d,\beta_c(d),h}-\langle\sigma_0\rangle_{\mathbb{Z}^d,\beta_c(d),0}}{h}=\infty.
\end{equation}
This and \eqref{eq:fdiff} complete the proof of the first part of the lemma. The second part follows from the first part and the discussion before the lemma. 
\end{proof}
We are ready to prove Theorem \ref{thm:1LY}.
\begin{proof}[Proof of Theorem \ref{thm:1LY}]
	We first note that by Corollary 1 of \cite{CJN22}, for fixed $\beta$, $\alpha_1(\Lambda_n,\beta)$ is decreasing in $n$. If $\Lambda_n=B_n$ for each $n\in\mathbb{N}$, then Theorem \ref{thm:1LY} follows from Propositions~\ref{prop:alpha1limit} and \ref{prop:fTaylor} and Lemma \ref{lem:fnottwice}.  For general $\Lambda_n$, we can find subsequences $\{\Lambda_{n_j}:j\in\mathbb{N}\}$ and $\{B_{n_j}:j\in\mathbb{N}\}$ such that
	\begin{equation}
		\Lambda_{n_1}\subset B_{n_1}\subset \Lambda_{n_2}\subset B_{n_2}\subset \Lambda_{n_3}\subset B_{n_3}\subset\dots.
	\end{equation}
By the monotonicty of $\alpha_1(\Lambda_{n_j},\beta)$ in $j$ and Theorem \ref{thm:1LY} for the the case $\Lambda_n=B_n$, we have
\begin{equation}
	\alpha_1(\Lambda_{n_j},\beta)\downarrow \alpha_1(\mathbb{Z}^d,\beta) \text{ as }j\uparrow\infty.
\end{equation}
The proof for the original sequence $\{\alpha_1(\Lambda_n,\beta):n\in\mathbb{N}\}$ follows again by the monotonicity of $\alpha_1(\Lambda_n,\beta)$ in $n$.
\end{proof}

\section*{Acknowledgments}
The research of the first author was partially supported by NSFC grant 11901394. The authors thank Federico Camia and Elliott Lieb for useful discussions.


\bibliographystyle{abbrv}
\bibliography{reference}

\begin{thebibliography}{10}

\bibitem{ABF87}
M.~Aizenman, D.~J. Barsky, and R.~Fern\'{a}ndez.
\newblock The phase transition in a general class of {I}sing-type models is
  sharp.
\newblock {\em J. Statist. Phys.}, 47(3-4):343--374, 1987.

\bibitem{ADCS15}
M.~Aizenman, H.~Duminil-Copin, and V.~Sidoravicius.
\newblock Random currents and continuity of {I}sing model's spontaneous
  magnetization.
\newblock {\em Comm. Math. Phys.}, 334(2):719--742, 2015.

\bibitem{AF86}
M.~Aizenman and R.~Fern\'{a}ndez.
\newblock On the critical behavior of the magnetization in high-dimensional
  {I}sing models.
\newblock {\em J. Stat. Phys.}, 44(3-4):393--454, 1986.

\bibitem{BBCKK04}
M.~Biskup, C.~Borgs, J.~T. Chayes, L.~J. Kleinwaks, and R.~Koteck\'{y}.
\newblock Partition function zeros at first-order phase transitions: a general
  analysis.
\newblock {\em Comm. Math. Phys.}, 251(1):79--131, 2004.

\bibitem{BBCK04}
M.~Biskup, C.~Borgs, J.~T. Chayes, and R.~Koteck\'{y}.
\newblock Partition function zeros at first-order phase transitions:
  {P}irogov-{S}inai theory.
\newblock {\em J. Stat. Phys.}, 116(1-4):97--155, 2004.

\bibitem{CJN21}
F.~Camia, J.~Jiang, and C.~M. Newman.
\newblock Ising model with {C}urie--{W}eiss perturbation.
\newblock {\em J. Stat. Phys.}, 188:1--23, 2022.

\bibitem{CJN22}
F.~Camia, J.~Jiang, and C.~M. Newman.
\newblock Monotonicity of {U}rsell functions in the {I}sing model.
\newblock {\em arXiv:2207.12247}, 2022.

\bibitem{DCT16}
H.~Duminil-Copin and V.~Tassion.
\newblock A new proof of the sharpness of the phase transition for {B}ernoulli
  percolation and the {I}sing model.
\newblock {\em Comm. Math. Phys.}, 343(2):725--745, 2016.

\bibitem{FV18}
S.~Friedli and Y.~Velenik.
\newblock {\em Statistical {M}echanics of {L}attice {S}ystems: {A} {C}oncrete
  {M}athematical {I}ntroduction}.
\newblock Cambridge University Press, Cambridge, 2018.

\bibitem{Gri67}
R.~B. Griffiths.
\newblock Correlations in {I}sing ferromagnets. i, ii.
\newblock {\em Journal of Mathematical Physics}, 8(3):478--489, 1967.

\bibitem{HJN22}
Q.~Hou, J.~Jiang, and C.~M. Newman.
\newblock Motion of {L}ee-{Y}ang zeros.
\newblock {\em arXiv:2208.00917}, 2022.

\bibitem{KS68}
D.~G. Kelly and S.~Sherman.
\newblock General {G}riffiths' inequalities on correlations in {I}sing
  ferromagnets.
\newblock {\em Journal of Mathematical Physics}, 9(3):466--484, 1968.

\bibitem{Leb72}
J.~L. Lebowitz.
\newblock Bounds on the correlations and analyticity properties of
  ferromagnetic {I}sing spin systems.
\newblock {\em Comm. Math. Phys.}, 28:313--321, 1972.

\bibitem{LY52}
T.~D. Lee and C.~N. Yang.
\newblock Statistical theory of equations of state and phase transitions. {II}.
  {L}attice gas and {I}sing model.
\newblock {\em Phys. Rev. (2)}, 87:410--419, 1952.

\bibitem{LR72}
E.~H. Lieb and D.~Ruelle.
\newblock A property of zeros of the partition function for {I}sing spin
  systems.
\newblock {\em J. Math. Phys.}, 13:781--784, 1972.

\bibitem{LS81}
E.~H. Lieb and A.~D. Sokal.
\newblock A general {L}ee-{Y}ang theorem for one-component and multicomponent
  ferromagnets.
\newblock {\em Comm. Math. Phys.}, 80(2):153--179, 1981.

\bibitem{Mcc15}
B.~M. McCoy.
\newblock {\em Advanced {S}tatistical {M}echanics}, volume 146 of {\em
  International Series of Monographs on Physics}.
\newblock Oxford University Press, New York, 2010.

\bibitem{New74}
C.~M. Newman.
\newblock Zeros of the partition function for generalized {I}sing systems.
\newblock {\em Comm. Pure Appl. Math.}, 27:143--159, 1974.

\bibitem{NG83}
H.~Nishimori and R.~B. Griffiths.
\newblock Structure and motion of the {L}ee-{Y}ang zeros.
\newblock {\em J. Math. Phys.}, 24(11):2637--2647, 1983.

\bibitem{Ott20}
S.~Ott.
\newblock Weak mixing and analyticity of the pressure in the {I}sing model.
\newblock {\em Comm. Math. Phys.}, 377(1):675--696, 2020.

\bibitem{Pen63}
O.~Penrose.
\newblock Convergence of fugacity expansions for fluids and lattice gases.
\newblock {\em J. Mathematical Phys.}, 4:1312--1320, 1963.

\bibitem{PR20}
H.~Peters and G.~Regts.
\newblock Location of zeros for the partition function of the {I}sing model on
  bounded degree graphs.
\newblock {\em J. Lond. Math. Soc. (2)}, 101(2):765--785, 2020.

\bibitem{Rue71}
D.~Ruelle.
\newblock Extension of the {L}ee-{Y}ang circle theorem.
\newblock {\em Phys. Rev. Lett.}, 26:303--304, 1971.

\bibitem{Rue73}
D.~Ruelle.
\newblock Some remarks on the location of zeroes of the partition function for
  lattice systems.
\newblock {\em Comm. Math. Phys.}, 31:265--277, 1973.

\bibitem{Shl86}
S.~B. Shlosman.
\newblock Signs of the {I}sing model {U}rsell functions.
\newblock {\em Comm. Math. Phys.}, 102(4):679--686, 1986.

\bibitem{Sim80}
B.~Simon.
\newblock Correlation inequalities and the decay of correlations in
  ferromagnets.
\newblock {\em Comm. Math. Phys.}, 77(2):111--126, 1980.

\bibitem{Sim93}
B.~Simon.
\newblock {\em The {S}tatistical {M}echanics of {L}attice {G}ases. {V}ol. {I}}.
\newblock Princeton Series in Physics. Princeton University Press, Princeton,
  NJ, 1993.

\bibitem{SG73}
B.~Simon and R.~B. Griffiths.
\newblock The {$(\phi ^{4})_{2}$} field theory as a classical {I}sing model.
\newblock {\em Comm. Math. Phys.}, 33:145--164, 1973.

\bibitem{Yan52}
C.~N. Yang.
\newblock The spontaneous magnetization of a two-dimensional {I}sing model.
\newblock {\em Phys. Rev. (2)}, 85:808--816, 1952.

\bibitem{YL52}
C.~N. Yang and T.~D. Lee.
\newblock Statistical theory of equations of state and phase transitions. {I}.
  {T}heory of condensation.
\newblock {\em Phys. Rev. (2)}, 87:404--409, 1952.

\end{thebibliography}

\end{document}